\documentclass[11pt]{amsart}
\input epsf
\usepackage{latexsym}
\setlength{\textheight}{7.5in} \setlength{\textwidth}{5.2in}
\flushbottom
\parindent=0pt
\parskip=4pt plus2pt minus2pt
\usepackage{amssymb,amsmath,amsthm,amscd, amssymb,
graphicx, enumerate, verbatim}
\usepackage[all]{xy}

\numberwithin{equation}{section}
\newtheorem{cor}[equation]{Corollary}

\newtheorem{prop}[equation]{Proposition}
\newtheorem{thm}[equation]{Theorem}

\newtheorem{Example}[equation]{Example}
\newenvironment{ex}{\begin{Example}\rm}{\end{Example}}
\newtheorem{remark}[equation]{Remark}
\newenvironment{rmk}{\begin{remark}\rm}{\end{remark}}

\newcommand{\e}{\varepsilon}
\newcommand{\II}{\mbox{I\!I}}

\def\G{\Gamma}

\def\d{\partial}

\def\S1{\bf S^1}

\newcommand{\ddr}{{\d_r}}

\newcommand{\chn}{{\mathbf{CH}^n}}

\newcommand{\chm}{{\mathbf{CH}^{n-1}}}

\mathsurround=1pt

\begin{document}

\abovedisplayskip=6pt plus3pt minus3pt
\belowdisplayskip=6pt plus3pt minus3pt

\title[An assortment of negatively curved ends]
{\bf An assortment of negatively curved ends}
\thanks{\it 2000 Mathematics Subject classification.\rm\ 
Primary 53C20.
\it\ Keywords:\rm\ negative curvature, end, warped product, finite volume.}\rm

\author{Igor Belegradek}

\address{Igor Belegradek\\School of Mathematics\\ Georgia Institute of
Technology\\ Atlanta, GA 30332-0160}\email{ib@math.gatech.edu}


\date{}
\begin{abstract} 
Motivated by recent groundbreaking work of Ontaneda,
we describe a sizable class of closed manifolds such that the product
of each manifold in the class with $\mathbb R$ admits a complete metric of bounded
negative sectional curvature which is an exponentially warped near
one end and has finite volume near the other end. 
\end{abstract}

\maketitle

\section{Introduction}

Let $V$ be a finite volume, complete, open Riemannian manifold
of sectional curvature $K$ within $[-1,0)$. Little is known
about the topology of $V$.
An argument of Gromov~\cite{Gro-jdg78} extended by 
Schroeder~\cite[Appendix 2]{BGS} implies that $V$ is diffeomorphic 
to the interior of a compact manifold $\overline V$
with boundary; thus any manifold compactification of $V$
is obtained by attaching an h-cobordism to 
$\overline V$ along $\d\overline V$. 

A well-known problem is to determine which manifolds occur as the 
boundary of $\overline V$. Gromov showed that $\d\overline V$ must 
have zero simplicial volume~\cite[p.37]{Gro-vol-bd-coh}, and this seems
to be the only known obstruction in the case when each component
of $\d\overline V$ is aspherical. Other obstructions were found
in~\cite{BelPha} when $\d\overline V$ has a non-aspherical component, 
denoted $C$; e.g. $\pi_1(C)$ cannot be an irreducible,
higher rank lattice, or a virtually nilpotent group.

Earlier examples of manifolds appearing as
components of $\d\bar V$ include 
generalized graph manifolds~\cite{AbrSch, Buy}, and
some circle bundle over real and 
complex hyperbolic manifolds~\cite{Fuj-warp, Bel-rh-warp, Bel-ch-warp}.
A recent breakthrough of Ontaneda allows to dramatically expand the list.

If a (not necessarily connected) manifold $B$ is diffeomorphic to the 
boundary of a connected, smooth, compact manifold $N$, then we say 
that $B$ {\it bounds} $N$, and if $N$ is not specified, we simply say $B$ {\it bounds}.

Ontaneda's proof starts with a closed manifold $B$ that bounds, and
applies relative strict hyperbolization of Charney-Davis~\cite{ChaDav-strict} 
to realize $B$ as a boundary of a compact manifold whose interior
admits a piecewise hyperbolic metric. Then under the assumption
that the building block in the hyperbolization is ``large enough''
Ontaneda is able to smooth the metric away from the boundary
to a Riemannian metric with $K$ near $-1$; this smoothing argument is 
technological tour de force. 
Near the boundary the metric has to be constructed by an ad hoc
method depending on $B$.
The following result is implicit in~\cite{Ont-pinch-hyperb}. 

\begin{thm} 
\label{thm-intro: ont}
{\bf (Ontaneda)}\ Let $B$ be a closed $n$-manifold that bounds and satisfies
the following:
\newline \textup{(i)}
If $n\ge 5$, then any h-cobordism from $B$ to another manifold is a product.\newline
\textup{(ii)}  $\mathbb R\times B$  admits 
a complete metric $g$ of sectional curvature within 
$[-1, 0)$ such  \phantom{\textup{(ii)}}
that $(-\infty, 0\,]\times B$ has finite $g$-volume, and
$g=dr^2+e^{2r} g_{_B}$ on $[c,\infty)\times B$  \phantom{\textup{(ii)}} for some $c>0$ and
a metric $g_B$ on $B$. \newline
Then $B$ bounds a manifold whose interior
admits a complete metric of finite volume and
sectional curvature in $[-1,0)$. 
\end{thm}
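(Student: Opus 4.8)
The plan is to run Ontaneda's argument, with hypothesis (ii) supplying the metric near the boundary that the construction requires. Since $B$ bounds, fix a connected compact manifold $N$ with $\partial N=B$ and apply the Charney--Davis relative strict hyperbolization~\cite{ChaDav-strict} to $N$ rel $\partial N$, choosing the hyperbolization building block large enough that the smoothing of~\cite{Ont-pinch-hyperb} will apply (the block is a free parameter, depending only on $B$). This yields a connected compact manifold $P$ with $\partial P=B$ whose interior carries a piecewise hyperbolic metric, hyperbolic outside a collar of $\partial P$.

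Next, apply Ontaneda's smoothing to replace this metric, away from a fixed collar of $\partial P$, by a smooth metric $g_0$ on the interior of $P$ whose sectional curvature lies in $[-1-\e,-1+\e]$ for a preassigned $\e\in(0,1)$, arranged so that on a collar of $\partial P$ one has $g_0=dr^2+e^{2r}g_B$ with $g_B$ the metric of (ii). Here we use that rescaling $g_B$ is harmless: $dr^2+e^{2r}(\lambda g_B)$ is isometric to $dr^2+e^{2r}g_B$ via a translation of $r$, so this warped normal form and the warped part $[c,\infty)\times B$ of $(\R\times B, g)$ may be taken to overlap in a nonempty region, with curvature in $[-1-\e,-1+\e]$ there as well (place the collar at large $r$). \emph{This is the step I expect to be the main obstacle}: what has to be extracted from~\cite{Ont-pinch-hyperb} is precisely that the technologically delicate interior smoothing is compatible with a prescribed exponentially warped normal form along $\partial P$, and that in dimension $\ge5$ the smooth structure may be chosen so that the boundary of the resulting manifold is exactly $B$ and not merely a manifold h-cobordant to it --- this is where hypothesis (i) is used.

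Then splice in $g$. Fix a level $r=c_1$ in the overlap region, and let $P_1\subset P$ be the compact manifold obtained by deleting the open collar of $\partial P$ on which $r<c_1$, so that $\partial P_1=\{r=c_1\}$ and $g_0=dr^2+e^{2r}g_B$ near $\partial P_1$. Let $E=(-\infty,c_1]\times B$ carry the metric $g$; since $c_1>c$, $g=dr^2+e^{2r}g_B$ near $\partial E=\{r=c_1\}$. Gluing $\partial P_1$ to $\partial E$ by the identity of $B$, the two warped collars agree and produce a smooth metric $\bar g$ on
\[
Q^{\circ}:=P_1\cup_B E.
\]
Since $g$ is complete and the $r\to-\infty$ end of $E$ lies at infinite distance while $P_1$ is compact, $\bar g$ is complete; and $\bar g$ has finite volume, being the volume of the compact $P_1$ plus the $g$-volume of $E$, which is finite since $E$ is the union of the finite-volume end $(-\infty,0]\times B$ and the compact piece $[0,c_1]\times B$.

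Finally, $\bar g$ has $-1-\e\le K_{\bar g}\le-1+\e$ on $P_1$ and $K_{\bar g}\in[-1,0)$ on $E$ (where $\bar g=g$), so $K_{\bar g}<0$ and $K_{\bar g}\ge-1-\e$ throughout; multiplying $\bar g$ by the constant $1+\e$ (uniformly, so the gluing is untouched) then normalizes the curvature into $[-1,0)$ while preserving completeness and finiteness of volume, as $\e<1$. The end $E$ of $Q^{\circ}$ is a product collar, so $Q^{\circ}$ is the interior of the connected compact manifold $Q$ obtained by adjoining a copy of $B$ to compactify $E$, and $\partial Q=B$. Thus $B$ bounds $Q$, whose interior admits a complete, finite-volume metric of sectional curvature in $[-1,0)$, as asserted.
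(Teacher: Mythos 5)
The paper does not actually prove Theorem~\ref{thm-intro: ont}: it is stated with the attribution ``implicit in~\cite{Ont-pinch-hyperb}'' and is treated as a black box. The only ``proof'' in the paper is the one-paragraph outline in the introduction (hyperbolization $\to$ interior smoothing $\to$ ad hoc construction near the boundary), and your reconstruction reproduces that outline faithfully. The gluing step (splicing $E=(-\infty,c_1]\times B$ with metric $g$ from (ii) into the hyperbolized piece along a level set where both metrics are $dr^2+e^{2r}g_B$), the use of the $r$-translation to absorb the scale of $g_B$, the finite-volume bookkeeping, and the final rescaling by $1+\e$ to bring curvature into $[-1,0)$ are all correct and are exactly the kind of elementary splicing the paper presumes the reader can supply.

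The one place where you are honest that you are hand-waving is indeed the crux, and it cannot be closed without going into~\cite{Ont-pinch-hyperb}: you assert that Ontaneda's smoothing can be ``arranged so that on a collar of $\partial P$ one has $g_0=dr^2+e^{2r}g_B$.'' This is not quite how the paper describes it --- the smoothing is said to happen \emph{away} from the boundary, with the near-boundary metric supplied separately by (ii), and the delicate issue is matching the two across an intermediate band while keeping pinching. Likewise, your account of where (i) enters (that the smoothed manifold might a priori only have boundary h-cobordant to $B$, and (i) trivializes that h-cobordism) is a plausible reading but is not argued; the paper gives no detail, and you correctly flag this. In short: your proposal is the same approach as the paper's sketch, carried out at the same level of detail; the genuinely technical content (Ontaneda's smoothing theorem and its boundary compatibility) is cited as a black box by both the paper and your write-up, so there is nothing in the paper against which that part can be checked.
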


Condition (ii) implies that each component of
$B$ is aspherical, and hence has torsion-free fundamental group. 
The Whitehead Torsion Conjecture, which is true for many groups of geometric 
origin, 
predicts that all torsion-free groups have 
zero Whitehead torsion. If the conjecture is true for
the fundamental group of each component of $B$, then (i) holds.

For example (ii) is true if 
$B$ is any infranilmanifold~\cite{BK-GAFA}, or any 
$3$-dimensional $Sol$ manifolds~\cite{Pha-sol}.
We add to the list as follows. 

\begin{thm} 
\label{thm-intro: class B} 
Condition \textup{(ii)} holds for every
manifold in the class $\mathcal B$ that is defined as
the smallest class of closed manifolds 
of positive dimension such that
\newline
$\bullet$ $\mathcal B$ contains each infranilmanifold,
every circle bundle of type  \textup{(K)}, 
and each \phantom{$\bullet$} closed manifold of $K\le 0$ with a local 
Euclidean de Rham factor;\newline
$\bullet$ $\mathcal B$ is closed under products, 
disjoint unions, and products with any compact \phantom{$\bullet$}
manifold of $K\le 0$.
\end{thm}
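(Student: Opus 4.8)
The plan is to prove the theorem by structural induction on the definition of the class $\mathcal B$. Write $\mathcal P$ for the class of all closed manifolds of positive dimension that satisfy condition \textup{(ii)}. Since $\mathcal B$ is by definition the \emph{smallest} class of closed positive-dimensional manifolds containing the three families of generators and closed under disjoint unions, products, and products with an arbitrary compact manifold of $K\le 0$, it is enough to prove that (a) each generator lies in $\mathcal P$, and (b) $\mathcal P$ is itself closed under those three operations; together these give $\mathcal B\subseteq\mathcal P$, which is the assertion.

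For (a) there are three cases. Infranilmanifolds lie in $\mathcal P$ by the result recalled in the introduction, \cite{BK-GAFA}. For a circle bundle $B$ of type \textup{(K)} over a real or complex hyperbolic manifold, the metric on $\mathbb R\times B$ is furnished by the warped-product constructions of Fujiwara and of the author \cite{Fuj-warp, Bel-rh-warp, Bel-ch-warp}: starting from a complete finite-volume (real, resp.\ complex) hyperbolic metric on the base, one pulls it back to $B$, chooses a suitable connection metric on $B$, and warps this over a half-line; the result is an end of curvature in $[-1,0)$ with the cusp asymptotics on one side and, after a modification far out, the exact form $dr^2+e^{2r}g_B$ on the other, the hypothesis ``type \textup{(K)}'' being precisely what forces the connection term to be small enough that the curvature stays in $[-1,0)$. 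For a closed manifold $B$ of $K\le 0$ carrying a local Euclidean de Rham factor $\mathbb R^k$, one uses that factor: replacing $\mathbb R^k$ by a horosphere in $\mathbf H^{k+1}$ and warping the complementary de Rham factor by a suitable one-variable profile yields a metric of curvature in $[-1,0)$ on $\mathbb R\times B$ which is a hyperbolic cusp-and-flare in the Euclidean directions — hence of finite volume on the $(-\infty,0]$ side — and which coincides with $dr^2+e^{2r}g_B$ for $r\ge c$.

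For (b), disjoint unions are immediate: after rescaling radial coordinates so that all cusp ends lie in $\{r\le 0\}$ and all warped ends begin at a common value $r=c$, take the disjoint union of the metrics supplied for the components. The substance of (b) is the behaviour under products. Given $B_1,B_2\in\mathcal P$ with metrics $g_1,g_2$ as in \textup{(ii)} — or $B_1\in\mathcal P$ with such a metric $g_1$ and $B_2$ a closed manifold carrying a metric $h_2$ of $K\le 0$ — I would build a metric on $\mathbb R\times(B_1\times B_2)$ as a doubly warped product $g=dr^2+\phi_1(r)^2h_1+\phi_2(r)^2h_2$, where $h_i$ is the relevant metric on $B_i$ (the warped-end cross-section metric of $g_i$, or the given metric of $K\le 0$). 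For $r\ge c$ set $\phi_1=\phi_2=e^r$, so $g=dr^2+e^{2r}(h_1+h_2)$ is of the form demanded by \textup{(ii)}; on a compact middle interval interpolate; and for $r\le -c$ let $\phi_1,\phi_2$ decay fast enough that $(-\infty,0]\times(B_1\times B_2)$ has finite volume. It then remains to choose the profiles so that the doubly warped product meets the curvature requirement of \textup{(ii)} on all of $\mathbb R\times(B_1\times B_2)$: besides the radial and fiberwise curvatures $-\phi_i''/\phi_i$ and $(K_{h_i}-(\phi_i')^2)/\phi_i^2$, which are governed by differential inequalities of the type already handled in the singly warped constructions, there are now the ``mixed'' planes spanned by a $B_1$-direction and a $B_2$-direction, whose curvature is $-\phi_1'\phi_2'/(\phi_1\phi_2)$, and this must be controlled simultaneously.

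I expect this product step to be the main obstacle. Forcing $\phi_1=\phi_2=e^r$ for $r\ge c$ and $\int_{-\infty}^{0}\phi_1^{\dim B_1}\phi_2^{\dim B_2}\,dr<\infty$ while keeping every sectional curvature within the prescribed range constrains the admissible profiles rather rigidly, and the interpolation across the middle interval — with matching of values and derivatives at the endpoints — must be carried out with care; the mixed-plane term $-\phi_1'\phi_2'/(\phi_1\phi_2)$ has no counterpart in the singly warped base cases, so this is where the genuinely new work lies. Everything else — the two non-infranilmanifold base cases, disjoint unions, and the reduction of the theorem to structural induction — is comparatively routine.
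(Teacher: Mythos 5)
Your structural-induction framework matches the paper's, and your treatment of disjoint unions and of the $K\le 0$ base case is close to the paper's Section 4. But the product step—which you correctly flag as the crux—is set up in a way that cannot succeed. You propose a doubly warped product $g=dr^2+\phi_1(r)^2h_1+\phi_2(r)^2h_2$ with each $h_i$ a \emph{fixed} metric on $B_i$. For a non-flat infranilmanifold $B_1$ there is \emph{no} fixed metric $h_1$ and scalar profile $\phi_1$ making $dr^2+\phi_1^2h_1$ negatively curved on the cusp side: the sectional curvature of a $2$-plane $P$ tangent to $B_1$ is $K_{h_1}(P)/\phi_1^2-(\phi_1'/\phi_1)^2$, so negativity forces $(\phi_1')^2>\sup K_{h_1}>0$ everywhere, which is incompatible with $\phi_1>0$ (and with $\phi_1\to 0$, needed for finite cusp volume) on $(-\infty,0]$. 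This is exactly why \cite{BK-GAFA} and \cite{Bel-ch-warp} warp different directions of $B$ by different functions, producing an $r$-family $g_r$ that is not a scalar multiple of any fixed metric; the same objection applies to type (K) circle bundles, whose construction (Section 6) has nothing to do with a finite-volume base or a Fujiwara connection metric but rather realizes the base as a compact totally geodesic complex hyperbolic hypersurface and modifies the cylindrical metric around it.

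The key idea your proposal lacks is to take the full families $g_{r,k}$ furnished by the base cases and set $g_r=g_{r,1}+g_{r,2}$, then verify the curvature and volume conditions for $dr^2+g_r$ directly. The tool that makes this computable is \emph{simultaneous diagonalizability}: each $g_{r,k}$ admits a local $g_{r,k}$-orthogonal frame independent of $r$, and this property passes to the product $g_r$. Section 3 of the paper then combines O'Neill's submersion formula (for the warped-product structure with base $dr^2+g_{r,k}$ and fiber $g_{r,3-k}$) with the Gauss formula and a Cauchy--Schwarz estimate on the $\II_{g_r}$ cross-terms to show that $dr^2+g_r$ has $K<0$, bounded curvature, and finite cusp volume whenever each $dr^2+g_{r,k}$ does and $\II_{g_r}$ is negative definite; the mixed-plane curvatures you worry about are handled by that estimate. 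A secondary omission: the metric of \cite{BK-GAFA} has cross-section $e^{2kr}g_B$ with $k$ the nilpotency degree, so the paper must taper that exponent down to $1$ without losing the pinching (Section 5), a modification your appeal to \cite{BK-GAFA} does not address.
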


An orientable circle bundle {\it has type \textup{(K)}} if 
its base is a closed complex hyperbolic $n$-manifold $M$ whose
holonomy representation $\pi_1(M)\to PU(n,1)$ lifts to $U(n, 1)$, and if the Euler class of the bundle
equals $-m\frac{\omega}{4\pi}$ for some nonzero integer $m$, where
$\omega$ is the K\"ahler form of $M$. For example, each nontrivial orientable circle
bundle over a genus two orientable closed surface
has type (K). 

It is immediate from~\cite{BarLue-borelconj, BFL} that
(i) holds for every manifold in $\mathcal B$, so
Theorems~\ref{thm-intro: ont}--\ref{thm-intro: class B} imply

\begin{cor}\label{into-cor class B}
If a manifold $B$ in $\mathcal B$ bounds,
then $B$ bounds a compact connected manifold whose interior $V$
admits a complete Riemannian metric of finite volume and
sectional curvature in $[-1,0)$.
\end{cor}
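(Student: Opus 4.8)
The plan is to recognize the corollary as the immediate conjunction of Theorem~\ref{thm-intro: ont} and Theorem~\ref{thm-intro: class B}. Fix $B\in\mathcal B$ that bounds; I want to verify that $B$ satisfies hypotheses (i) and (ii) of Theorem~\ref{thm-intro: ont} and then read off the conclusion. Hypothesis (ii) is exactly the statement of Theorem~\ref{thm-intro: class B}, so nothing new is needed there; in particular (ii) already records that each component of $B$ is aspherical with torsion-free fundamental group. Thus the only thing left to argue is hypothesis (i): when $\dim B\ge 5$, every h-cobordism from $B$ to another manifold is a product (for $\dim B<5$ there is nothing to check).

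For (i) I would peel off the recursive definition of $\mathcal B$ at the level of fundamental groups. Unwinding the closure operations, every manifold in $\mathcal B$ is a disjoint union of finite products of ``basic'' pieces, where a basic piece is an infranilmanifold, a circle bundle of type~(K), or a compact manifold of $K\le 0$ (the nonpositively curved manifolds with a Euclidean de Rham factor being a special case). Hence the fundamental group of a component of $B$ is a finite direct product of groups, each of which is (a) virtually nilpotent, (b) a central extension of a closed complex hyperbolic manifold group by $\Z$, or (c) the fundamental group of a closed nonpositively curved manifold, i.e.\ a torsion-free group acting properly cocompactly by isometries on a CAT(0) space. Each of (a), (b), (c) satisfies the Farrell--Jones conjecture by \cite{BarLue-borelconj, BFL}, that class of groups is closed under finite direct products, and for torsion-free groups the Farrell--Jones conjecture yields $\mathrm{Wh}(\pi_1)=0$ together with vanishing $\widetilde K_0$. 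So by the s-cobordism theorem every h-cobordism from a component of $B$ is a product, which is hypothesis (i); this is precisely the observation, already noted in the excerpt, that (i) is immediate from \cite{BarLue-borelconj, BFL}.

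With (i) and (ii) established for $B$, Theorem~\ref{thm-intro: ont} applies verbatim and produces a compact connected manifold with boundary $B$ whose interior $V$ carries a complete, finite-volume Riemannian metric of sectional curvature in $[-1,0)$ --- exactly the assertion of the corollary. There is essentially no geometric obstacle in this deduction: the exponentially warped end, the finite-volume end, and the smoothing of the piecewise hyperbolic metric are all absorbed into Theorems~\ref{thm-intro: ont} and~\ref{thm-intro: class B}. The only point that rewards a little care is the bookkeeping for (i) across the three closure operations defining $\mathcal B$, namely checking that the relevant $K$-theoretic vanishing is inherited under finite direct products and is indeed available for circle-bundle-of-type-(K) groups; once that is in hand the corollary follows with no further construction.
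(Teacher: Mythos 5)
Your proposal is correct and follows essentially the same route as the paper, which likewise deduces the corollary by combining Theorems~\ref{thm-intro: ont} and~\ref{thm-intro: class B} and notes that hypothesis (i) is immediate from \cite{BarLue-borelconj, BFL}. Your extra bookkeeping (unwinding $\mathcal B$ into products of basic pieces, closure of the Farrell--Jones class under finite direct products, and the s-cobordism theorem) just fills in the details the paper leaves implicit.
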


Theorem~\ref{thm-intro: class B} is proved by showing that 
each manifold in $\mathcal B$ carries what
we call a simultaneously diagonalizable family of metrics $g_r$
such that $g=dr^2+g_r$ is as in (ii), and the main observation
is that simultaneous diagonalizability
behaves well under products, and facilitates curvature computations.

Results of this paper can be modified to hold for
other curvature conditions such as $K\le -1$, or $-1\le K\le 0$,
and other growth assumptions on the ends, such as
infinite volume or $\mathrm{Rad}\,\mathrm{Inj}\to 0$, 
which is all left to an interested reader to explore.

{\bf Acknowledgments} The author is grateful for NSF support (DMS-1105045).

\section{Curvature formulas}

Let us review the curvature 
formulas for the metric $g=dr^2+g_r$ 
on $I\times B$ that were derived in~\cite[Section 6]{BelWei} 
and corrected in~\cite[Appendix C]{Bel-rh-warp},
where $I$ is an open interval, and $B$ is a manifold.
The family of metrics $(B, g_r)$, $r\in I$ is called
{\it simultaneously diagonalizable\,} if near 
each point of $B$ there is a basis of vector 
fields $\{X_i\}$ that is $g_r$-orthogonal
for each $r$.  
Set $h_i(r)=\sqrt{g_r(X_i,X_i)}$ and note that $Y_i=X_i/h_i$
form a $g_r$-orthonormal basis.
Denote $g(X,Y)$, $\frac{\d}{\d r}$ by 
$\langle X , Y\rangle$, $\ddr$, respectively. 
If $g_r$ are simultaneously diagonalizable, the
components of the curvature tensor of $g=dr^2+g_r$ are
\begin{eqnarray}\label{form: curv of warped prod}
& \langle R_g (Y_i,Y_j) Y_j,Y_i\rangle=
\langle R_{g_r}(Y_i,Y_j) Y_j,Y_i\rangle -
\frac{h_i^\prime h_j^\prime}{h_ih_j},\\
& \langle R_g(Y_i,Y_j) Y_l,Y_m\rangle=
\langle R_{g_r}(Y_i,Y_j) Y_l,Y_m\rangle
\ \ \ \mathrm{if}\ \{i,j\}\neq \{l,m\},\\
  & \langle R_g(Y_i,\ddr)\ddr , Y_i \rangle=
-\frac{h_i^{\prime\prime}}{h_i},\ \ \ \ \ \langle
R_g(Y_i,\ddr)\ddr, Y_j \rangle=0\ \ \ \mathrm{if}\ i\neq j\\
&2\langle R_g(\ddr, Y_i) Y_j,Y_k\rangle=
\end{eqnarray}
\vspace{-10pt}
\begin{eqnarray*}
\langle [Y_i,Y_j],Y_k\rangle
\left(\ln\frac{h_k}{h_j}\right)^\prime
+\langle [Y_k,Y_i],Y_j\rangle
\left(\ln\frac{h_j}{h_k}\right)^\prime
+\langle [Y_k,Y_j],Y_i\rangle
\left(\ln\frac{h_i^2}{h_jh_k}\right)^\prime 
\end{eqnarray*}
The second fundamental form $\II_{g_r}$ of $g_r$ is given by
$\II_{g_r}(Y_i, Y_j)=0$ if $i\neq j$, and 
$\II_{g_r}(Y_i, Y_i)=-\frac{h_i^\prime}{h_i}\d_r$~\cite{BelWei}.

\section{Products}
\label{sec: products}

Given manifolds $B_1$, $B_2$
consider the metrics $dr^2+g_{r,k}$ on $\mathbb R\times B_k$ 
with $k=1,2$,
and $g=dr^2+g_r$ on $\mathbb R\times B_1\times B_2$ where
$g_r=g_{r,1}+g_{r,2}$.
Denote the negative reals by $\mathbb R_-$.

\begin{thm} 
For $k=1,2$ suppose that $g_{r,k}$ is simultaneously diagonalizable, and
{\rm $\II_{g_r}$} is negative definite for all $r$. Then\newline
\textup{(1)} $K_{g}<0$ if $K_{dr^2+g_{r,1}}$, $K_{dr^2+g_{r,2}}$ are negative.\newline
\textup{(2)} $K_g$ is bounded if $K_{dr^2+g_{r,1}}$, $K_{dr^2+g_{r,2}}$, 
{\rm $\II_{g_r}$} are bounded.\newline
\textup{(3)} $\mathrm{Vol}(\mathbb R_-\times B_1\times B_2, g)$ is finite if 
$\mathrm{Vol}(\mathbb R_-\times B_1, dr^2+g_{r,k})$ and $\mathrm{Vol}(B_2, g_{0,3-k})$ 
\phantom{(3)} are finite for some $k$.
\end{thm}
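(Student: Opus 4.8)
The plan is to work in a local simultaneously diagonalizing frame for $g_r = g_{r,1} + g_{r,2}$, using that the union of such frames for $g_{r,1}$ on $B_1$ and $g_{r,2}$ on $B_2$ is one for $g_r$ on the product; the warping functions $h_i$ are simply inherited from the two factors, and a Lie bracket $[Y_i, Y_j]$ is nonzero only when $Y_i, Y_j$ come from the \emph{same} factor. First I would prove (1). Take a $2$-plane spanned by $g$-orthonormal vectors; decomposing along the splitting $\mathbb R \oplus T B_1 \oplus T B_2$ and using the curvature formulas \eqref{form: curv of warped prod}, the sectional curvature of $g$ at that plane is a convex combination of terms of three types: sectional curvatures of $dr^2 + g_{r,1}$, sectional curvatures of $dr^2 + g_{r,2}$, and ``mixed'' terms $\langle R_g(Y_i, Y_j)Y_j, Y_i\rangle = -\frac{h_i' h_j'}{h_i h_j}$ with $Y_i$ from one factor and $Y_j$ from the other (the off-diagonal and $\partial_r$-bracket terms vanish because brackets across factors vanish). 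A mixed term equals $-\II_{g_r}(Y_i,Y_i)_r \cdot \II_{g_r}(Y_j,Y_j)_r$, i.e.\ a product of the two diagonal second fundamental form entries, one from each factor; since $\II_{g_r}$ is negative definite, each such diagonal entry, which is $-h'_i/h_i$, has a fixed sign, so the mixed term is $-(\text{negative})\cdot(\text{negative}) < 0$. Hence $K_g$ is a convex combination of quantities all $\le$ something negative, so $K_g < 0$.

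For (2), the same decomposition shows $|K_g|$ at any plane is controlled by a bounded number of terms, each bounded: the factor sectional curvatures are bounded by hypothesis, and each mixed term $-\frac{h_i' h_j'}{h_i h_j}$ is a product of two entries of $\II_{g_r}$, which are bounded by hypothesis; the remaining pieces of the curvature tensor in the product frame are again just the factor tensors. So $K_g$ is bounded. For (3), the volume form of $g$ on $\mathbb R_- \times B_1 \times B_2$ factors, in the product frame, as $dr \wedge \mathrm{vol}_{g_{r,1}} \wedge \mathrm{vol}_{g_{r,2}}$, where $\mathrm{vol}_{g_{r,k}} = \big(\prod h_i\big)\,dx$ over the indices of factor $k$. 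Negative definiteness of $\II_{g_r}$ means each $h'_i/h_i > 0$, so each $h_i(r)$ is increasing in $r$; therefore for $r \le 0$ we have $\mathrm{vol}_{g_{r,k}} \le \mathrm{vol}_{g_{0,k}}$ pointwise. Pick the index $k$ for which the hypotheses hold and bound $\mathrm{vol}_{g_{r,3-k}} \le \mathrm{vol}_{g_{0,3-k}}$; then
\[
\mathrm{Vol}(\mathbb R_- \times B_1 \times B_2, g) \le \mathrm{Vol}(B_{3-k}, g_{0,3-k}) \cdot \mathrm{Vol}(\mathbb R_- \times B_k, dr^2 + g_{r,k}),
\]
which is finite by assumption.

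The main obstacle I anticipate is purely bookkeeping in (1): one must be careful that a general $2$-plane is spanned by vectors with components in all three summands, so after an orthonormal change of basis the sectional curvature is genuinely a \emph{convex} combination (nonnegative coefficients summing to $1$) of the listed diagonal curvature terms, with no stray off-diagonal curvature components surviving — this uses crucially that cross-factor brackets vanish, killing the terms in \eqref{form: curv of warped prod} of type $\langle R_g(\partial_r, Y_i)Y_j, Y_k\rangle$ and the off-diagonal $\langle R_g(Y_i, Y_j)Y_l, Y_m\rangle$ whenever the indices mix the factors. Once that reduction is in place, the sign analysis via $\II_{g_r}$ is immediate. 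I would state the convex-combination reduction as a short lemma (it is essentially the standard fact that sectional curvature of an orthogonal sum frame is an average of ``coordinate'' curvatures when the relevant mixed curvature components vanish) and then apply it uniformly for parts (1) and (2).
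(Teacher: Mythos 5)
Your proposal is correct, and the convex--combination route is a genuinely different (and arguably cleaner) organization than the paper's. The paper expands $\langle R_g(a\partial_r+C,\,b\partial_r+D)(b\partial_r+D),\,a\partial_r+C\rangle$ into six summands, checks that five of them split exactly over the two factors, and handles the sixth summand $(C,D,D,C)_g$ via the Gauss equation, a diagonal operator $T$ with $T_{ii}=\sqrt{|h_i'|/h_i}$, and a completing--the--square/Cauchy--Schwarz inequality to get $\langle R_g(u,v)v,u\rangle\le\sum_k\langle R_g(P_ku,P_kv)P_kv,P_ku\rangle$; one then invokes O'Neill to identify the right-hand summands as (nonpositive multiples of) sectional curvatures of $dr^2+g_{r,k}$. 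Your version instead works directly with the Pl\"ucker expansion: writing $u=\sum a_pY_p$, $v=\sum b_pY_p$ in a simultaneously diagonalizing frame, the diagonal curvature components enter $R_g(u,v,v,u)$ with the nonnegative weights $(a_pb_q-a_qb_p)^2$, which sum to $|u\wedge v|^2=1$. Grouping unordered index pairs into ``within $\{\partial_r\}\cup TB_1$'', ``within $\{\partial_r\}\cup TB_2$'', and ``cross-factor'', and using that every off-diagonal curvature component with indices in two different $B_k$'s vanishes, you obtain the same identity the paper proves, but phrased as a genuine convex combination of one $dr^2+g_{r,1}$-sectional curvature, one $dr^2+g_{r,2}$-sectional curvature, and mixed terms $-h_p'h_q'/(h_ph_q)$. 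This makes the sign conclusion in (1) and the boundedness in (2) immediate, and it handles the strictness in (1) transparently: the weights sum to $1$, so some weight is positive, and every ingredient is strictly negative. Part (3) is essentially identical to the paper's Fubini-plus-monotonicity argument.

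Two points deserve to be spelled out when you state your ``convex-combination lemma.'' First, it is \emph{not} true that all off-diagonal components drop out; components $\langle R_g(Y_i,Y_j)Y_l,Y_m\rangle$ with $\{i,j\}\neq\{l,m\}$ but all four indices lying in $\{\partial_r\}\cup TB_k$ survive, and you must absorb them into the single term $K_{dr^2+g_{r,k}}(P_ku,P_kv)\cdot|P_ku\wedge P_kv|^2$, which is legitimate precisely because $R_g$ restricted to the horizontal block coincides with $R_{dr^2+g_{r,k}}$ (the vanishing of the $A$-tensor / O'Neill). Second, your two blocks overlap in the $\partial_r$-direction; the bookkeeping still closes because the Pl\"ucker sum is over unordered pairs, a pair containing $\partial_r$ lies in exactly one of the two within-factor groups, and the ``pair'' $\{\partial_r,\partial_r\}$ does not occur, so there is no double-counting and the weights still total $1$. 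Once those two points are recorded, your lemma, and with it parts (1)--(3), are airtight.
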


\begin{rmk}
If $\II_{g_r}$ is positive definite, the same result holds
with $\mathbb R_-$ replaced by the positive reals. 
\end{rmk}

\begin{proof}
The product $g_r=g_{r,1}+g_{r,2}$ is simultaneously diagonalizable,
so let $Y_i$ be the corresponding basis vectors tangent to one of
the factors.
For brevity denote $g\langle R_g (A,B) C, D\rangle$ by
$\left( A, B, C, D\right)_g$.
Fix vectors $C$, $D$, and write $C=C_1+C_2$, $D=D_1+D_2$
where $C_k$, $D_k$ are tangent to $B_k$. Fix reals $a, b$
such that $a\d_r+C$, $b\d_r+D$ are orthonormal.

Think of $g$ is the Riemannian submersion metric with base $dr^2+g_{r,k}$ and fiber
$g_{r,3-k}$. Since the submersion metric $g$ is a warped product, its $A$-tensor 
vanishes~\cite[9.26]{Bes-book}, so
by O'Neill's formula~\cite[9.28f]{Bes-book} 
$R_g$ restricted to the horizontal
space equals $R_{dr^2+g_{r, k}}$, and hence the $g$-sectional curvature of any horizontal
plane is negative under the assumptions of (1) and bounded under the assumptions of (2). 
Thus if we show that
\[
\mbox{\footnotesize
$\left( a\d_r+C, b\d_r+D, b\d_r+D, a\d_r+C\right)_g\le
\displaystyle{\sum_{k=1}^2}
\left( a\d_r+C_k, b\d_r+D_k, b\d_r+D_k, a\d_r+C_k\right)_g$}
\]
then (1) would follow. Now $\left( a\d_r+C, b\d_r+D, b\d_r+D, a\d_r+C\right)_g$ equals
\[
2ab\left( \d_r, D, \d_r, C\right)_g +
a^2\left( \d_r, D, D, \d_r\right)_g +
2a\left( \d_r, D, D, C\right)_g +
\]
\[
b^2\left( C, \d_r, \d_r, C\right)_g +
2b\left( C, \d_r, D, C\right)_g+
\left( C, D, D, C\right)_g 
\]
and 
each $\left( a\d_r+C_k, b\d_r+D_k, b\d_r+D_k, a\d_r+C_k\right)_g$ has 
a similar decomposition into six summands.
The desired inequality is to be established one summand at a time,
and in fact, it is an equality except for the last summand.

As $\left( \d_r, Y_i, \d_r, Y_j\right)_g=0$ for $i\neq j$
we see that 
$\left(\d_r, D, \d_r, C\right)_g=
\left( \d_r, D_1, \d_r, C_1\right)_g+\left( \d_r, D_2, \d_r, C_2\right)_g$,
and similar equalities hold for  
$\left(\d_r, D, D, \d_r\right)_g$, $\left( C, \d_r, \d_r, C\right)_g$. 

As $\left( \d_r, Y_i, Y_j, Y_k\right)_g=0$ unless $Y_i$, $Y_j$, $Y_k$
are tangent to the same factor, we deduce 
$\left( \d_r, D, D, C\right)_g=\left( \d_r, D_1, D_1, C_1\right)_g+
\left( \d_r, D_2, D_2, C_2\right)_g$; a similar formula holds for 
$\left( C, \d_r, D, C\right)_g$.

The last summand is given by the Gauss formula 
\[
\mbox{\small
$\left( C, D, D, C\right)_g=\left( C, D, D, C\right)_{g_r}+
\langle \II_{g_r}(C,D), \II_{g_r}(C,D)\rangle_g- \langle \II_{g_r}(C,C), \II_{g_r}(D,D)\rangle_g$}
\]
and again a similar 
formula holds for $\left( C_k, D_k, D_k, C_k\right)_g$.

Since $g_r$ is the product metric, 
$\left( C_i, D_j, D_k, C_m\right)_{g_r}=0$ unless $i,j,k, l$
are all equal, so $\left( C, D, D, C\right)_{g_r}=
\left( C_1, D_1, D_1, C_1\right)_{g_r}+\left( C_2, D_2, D_2, C_2\right)_{g_r}$. 
Let $T$ be the diagonal matrix with $ii$ entry equal to 
$\sqrt{\frac{|h_i^\prime|}{h_i}}$. Since $\II_{g_r}$ is nonpositive definite,
$\langle TA, TB\rangle_g \d_r=-\II_{g_r}(A,B)$, so
\[
\mbox{\small
$\langle \II_{g_r}(C,D), \II_{g_r}(C,D)\rangle_g- \langle \II_{g_r}(C,C), \II_{g_r}(D,D)\rangle_g=
\langle TC,TD\rangle_g^2 - \langle TC,TC\rangle_g \langle TD,TD\rangle_g$}.
\] 
The same formula holds for $C_k, D_k$ in place of $C, D$.
Combining the above with vanishing of $\langle TC_i, TC_j\rangle_g$,
$\langle TD_i, TD_j\rangle_g$, $\langle TD_i, TC_j\rangle_g$ for $i\neq j$
gives
\[
\left( C, D, D, C\right)_g-
\left( C_1, D_1, D_1, C_1\right)_g-\left( C_2, D_2, D_2, C_2\right)_g=
\]
\[
\left(\langle TC_1, TD_1\rangle_g+\langle TC_2, TD_2\rangle_g\right)^2-
(|TC_1|^2_g+|TC_2|^2_g)(|TD_1|^2_g+|TD_2|^2_g)\le
\]
\[
\left(|TC_1|_g|TD_1|_g+|TC_2|_g |TD_2|_g\right)^2-
(|TC_1|^2_g+|TC_2|^2_g)(|TD_1|^2_g+|TD_2|^2_g)=
\]
\[
-\left(
|TC_1|_g |TD_2|_g-|TC_2|_g |TD_1|_g
\right)^2\le 0.
\]
which completes the proof of (1). 

To prove (2) note that the above inequalities are
equalities except in one case where the difference
is controlled by $\displaystyle{\max_{i,r}\frac{|h_i^\prime|}{h_i}}$,
which is bounded. Finally $\left( a\d_r+C_k, b\d_r+D_k, b\d_r+D_k, a\d_r+C_k\right)_g$ 
is the product of two bounded quantities, 
$g$-area of the parallelogram and $dr^2+g_{k,r}$-sectional curvature
of the plane each spanned by $\{a\d_r+C_k, b\d_r+D_k\}$,
so (2) follows.

Since $\II_{g_r}$ is nonpositive definite, each $h_i$ is nondecreasing,
so the identity map $(B_k, g_{0,})\to (B_k, g_{r, k})$ with $r<0$
is $1$-Lipschitz, and hence volume nonincreasing. Thus  
if $B_k$ has finite $g_{0,k}$-volume, then
the $g_{r, k}$-volume of $B_k$ is uniformly 
bounded on $\mathbb R_-$. Now (3) follows from
the Fubini theorem for the Riemannian submersion metric $g$
with base $dr^2+g_{r,k}$ and fiber $g_{r,3-k}$.
\end{proof}

\section{Nonpositively curved manifolds}
\label{sec: npc}

Here is a souped up version of the fact that
$dr^2+e^{2r} g_{_B}$ has $K<0$ whenever
$K_{g_{_B}}\le 0$.

\begin{thm} 
\label{thm: bounded npc}
If $(B, g_{_B})$ is a manifold of bounded nonpositive curvature,
then there is a convex, increasing, smooth, positive function $h$ 
that equals $e^r$ for large $r$, and such that the sectional curvature
$(\mathbb R\times B, dr^2+h^2 g_{_B})$ is negative and bounded below. 
\end{thm}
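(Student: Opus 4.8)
The plan is to choose the warping function $h$ so that $g=dr^2+h^2g_{_B}$ becomes a warped product, and to read off its curvature from the formulas recalled above with $g_r=h^2g_{_B}$. This family is simultaneously diagonalizable: any $g_{_B}$-orthonormal local frame $\{X_i\}$ is $g_r$-orthogonal for every $r$, and all the warping functions $h_i=\sqrt{g_r(X_i,X_i)}$ equal $h$. Hence every logarithmic ratio occurring in the formulas is identically $0$, so those formulas give: all mixed components $\langle R_g(\d_r,Y_i)Y_j,Y_k\rangle$ vanish; $\langle R_g(Y_i,\d_r)\d_r,Y_j\rangle=-\tfrac{h''}{h}\,\delta_{ij}$; and, by the usual scaling $K_{h^2g_{_B}}=h^{-2}K_{g_{_B}}$, every $2$-plane $\sigma\subset TB$ has $g$-sectional curvature $\tfrac{1}{h^2}K_{g_{_B}}(\sigma)-\tfrac{(h')^2}{h^2}$.

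Next I would compute the sectional curvature of an arbitrary $2$-plane $P\subset T(\mathbb R\times B)$. Since $TB$ has codimension one, $P\cap TB$ contains a line, so $P$ has a $g$-orthonormal basis $\{w,\,a\d_r+v\}$ with $w,v\in TB$, $v\perp w$, and $a^2+|v|_g^2=1$. Expanding $\langle R_g(w,a\d_r+v)(a\d_r+v),w\rangle$ and using the symmetries of $R_g$, the terms linear in $a$ reduce to a multiple of a mixed component $\langle R_g(\d_r,Y_i)Y_j,Y_k\rangle$ and hence vanish, leaving
\[ K_g(P)=a^2\Bigl(-\tfrac{h''}{h}\Bigr)+(1-a^2)\,\tfrac{1}{h^2}\bigl(K_{g_{_B}}(\sigma)-(h')^2\bigr), \]
where $\sigma$ is the plane spanned by $w$ and $v$ (the second term read as $0$ when $v=0$). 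Thus $K_g(P)$ is a convex combination of $-h''/h$ and $\tfrac{1}{h^2}(K_{g_{_B}}(\sigma)-(h')^2)$. Granting $K_{g_{_B}}\le 0$: if $h,h',h''>0$ everywhere, both of these numbers are negative, so $K_g<0$; and if in addition $|K_{g_{_B}}|$ is bounded, $\inf h>0$, and $h'/h,\,h''/h$ are bounded above, then both numbers are bounded below, so $K_g$ is bounded below.

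It remains to produce such an $h$ that equals $e^r$ for large $r$. Fix $r_1$ and let $h'=\phi$ on all of $\mathbb R$, where $\phi$ is smooth, positive, strictly increasing, equal to $e^r$ for $r\ge r_1$, equal to $\tfrac12 e^r$ for $r$ sufficiently negative, and satisfies $\phi(r)\le e^r$ for $r\le r_1$; such a $\phi$ is easily built, e.g. as $\psi e^r$ with $\psi$ a smooth nondecreasing function running from $\tfrac12$ to $1$. Set $h(r)=e^{r_1}+\int_{r_1}^r\phi$. Then $h$ is smooth and positive, $h'=\phi>0$ and $h''=\phi'>0$ everywhere, $h(r)=e^r$ for $r\ge r_1$, and $h$ decreases as $r\to-\infty$ to the limit $L:=e^{r_1}-\int_{-\infty}^{r_1}\phi$, which is positive since $\phi\le e^r$ on $(-\infty,r_1]$ with strict inequality on a set of positive measure. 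Hence $\inf h=L>0$; moreover $\phi$ and $\phi'$ are bounded on $(-\infty,r_1]$ (they are $\tfrac12 e^r$ near $-\infty$, $e^r$ near $r_1$, and smooth on the compact interpolation interval) while $h\ge L$ there, and $h'/h=h''/h=1$ for $r\ge r_1$, so $h'/h$ and $h''/h$ are bounded above. By the previous paragraph $(\mathbb R\times B,dr^2+h^2g_{_B})$ then has negative, bounded-below curvature, and $h$ has all the asserted properties.

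The step I expect to be the real obstacle is this last construction, rather than the curvature computation (which just feeds the given formulas into a convex-combination identity). Strict negativity of $K_g$ forces $h$ to be strictly convex on all of $\mathbb R$, hence $h'$ to be strictly positive and increasing; but the lower curvature bound forces $\inf h>0$, since otherwise $\tfrac{1}{h^2}K_{g_{_B}}$ is unbounded below as $r\to-\infty$, and an increasing $h$ with $\inf h>0$ must tend to a positive limit at $-\infty$. Reconciling these is the one place needing an explicit (if routine) choice: one lets $h'$ decay to $0$ at $-\infty$ fast enough — exponentially is more than enough — that $h$ retains a finite positive limit while staying strictly convex.
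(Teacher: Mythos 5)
Your proof is correct and follows essentially the same approach as the paper: the same curvature formula (a convex combination of $-h''/h$ and $h^{-2}\bigl(K_{g_{_B}}-(h')^2\bigr)$), and the same strategy of choosing $h$ strictly convex, increasing, bounded below away from zero, and equal to $e^r$ for large $r$. The only cosmetic difference is the explicit construction of $h$ --- the paper sets $h(r)=e^r+\tau$ for $r<0$ and smooths near $0$, while you integrate a strictly increasing $\phi$ decaying like $\tfrac12 e^r$ at $-\infty$; both realize the same asymptotics.
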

\begin{proof} 
Any $2$-plane tangent to $\mathbb R\times B$ 
is of the form
$\mathrm{span}\{X_1, cX_2+d\d_r\}$ where each $\{X_1, X_2\}$ are $g_{_B}$-orthonormal
vectors tangent to the $B$-factor and $c^2+d^2=1$. Let $Y_i:=X_i/h$, so that $Y_1$, $Y_2$, $\d_r$ is $g$-orthonormal, where $g=dr^2+h^2 g_{_B}$.
Then $\langle R_g(Y_i, Y_j, \d_r) Y_k\rangle_g=0$ and
\[
K_g(Y_1, cY_2+d\d_r)=
c^2\,\left(\frac{K_{g_B}(Y_1, Y_2)}{h^2}-
\left(\frac{h^\prime}{h}\right)^2\right)-
d^2\,\frac{h^{\prime\prime}}{h}.
\]
which is negative if $h^\prime$, $h^{\prime\prime}$ are negative and $K_{g_B}\le 0$.
To make $K_g$ bounded below fix any $\tau>0$, and let
$h$ be an increasing strictly convex function such that $h(r)=e^{r}+\tau$ for $r<0$ and 
$h(r)=e^r$ for $r\ge r_\tau$, which exists is $r_\tau$ is large enough.
(e.g. $h$ can be obtained by smoothing the function
that equals $e^{r}+\tau$ for $r<0$, equals $r+1+\tau$ until the line intersects the graph
of $e^r$, and equals $e^r$ after that).
Then $K_g(Y_1, cY_2+d\d_r)$ is negative and bounded. 
\end{proof}

\begin{cor} Let $(B, g_{_B})$ be a compact
manifold of nonpositive curvature whose universal cover has
a Euclidean de Rham factor. Then $B$ has a simultaneously diagonalizable family
of metrics $g_r$ such that $g_r=e^{2r} g_{_B}$ for large $r$,
$K_{dr^2+g_r}$ is negative and bounded below,
and $\mathrm{Vol} (\mathbb R_-\times B, dr^2+g_r)$ is finite.
\end{cor}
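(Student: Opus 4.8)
The plan is to read $g_r$ off the de Rham decomposition of the universal cover, warping the Euclidean directions by $e^r$ — which shrinks them to zero size as $r\to-\infty$ and so forces finite volume there — and warping the remaining directions by a function that grows to $e^r$ but stays bounded below near $-\infty$, so as to keep the curvature bounded below; the curvature claim then reduces to the product theorem of Section~\ref{sec: products} and to Theorem~\ref{thm: bounded npc}. Write the universal cover as a Riemannian product $\mathbb R^\ell\times N$, where $\mathbb R^\ell$ is the Euclidean de Rham factor (so $\ell\ge1$) and $(N,g_N)$ collects the remaining factors; since $N$ is locally isometric to the compact $B$, the metric $g_N$ has bounded nonpositive curvature. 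By uniqueness of the de Rham decomposition the two factors are invariant under the deck group $\pi_1(B)$, so the distributions tangent to them descend to parallel, $g_{_B}$-orthogonal subbundles $E,E^\perp\subset TB$ with $\dim E=\ell$; being parallel, $E$ and $E^\perp$ are integrable, and near each point $(B,g_{_B})$ is a local Riemannian product of an $E$-leaf (flat) and an $E^\perp$-leaf.

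Decompose the metric tensor as $g_{_B}=g_{_B}^{E}+g_{_B}^{E^\perp}$ along $TB=E\oplus E^\perp$, fix $\tau>0$, and let $h$ be the strictly convex, increasing, positive function produced in the proof of Theorem~\ref{thm: bounded npc} (it depends only on $\tau$, equals $e^r+\tau$ for $r\le0$, and equals $e^r$ for $r\ge r_\tau$). Put
\[
g_r:=e^{2r}\,g_{_B}^{E}+h(r)^2\,g_{_B}^{E^\perp}.
\]
Then each $g_r$ is a Riemannian metric and $g_r=e^{2r}g_{_B}$ once $r\ge r_\tau$. Near every point of $B$, a $g_{_B}$-orthonormal frame whose first $\ell$ vectors span $E$ and whose remaining vectors span $E^\perp$ is $g_r$-orthogonal for all $r$, with $h_i=e^{r}$ for $i\le\ell$ and $h_i=h$ for $i>\ell$; thus $\{g_r\}$ is simultaneously diagonalizable, and since $e^r$ and $h$ are positive and increasing, $\II_{g_r}(Y_i,Y_i)=-\tfrac{h_i'}{h_i}\,\partial_r$ is negative definite.

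For the curvature I would pass to the universal cover $\mathbb R\times\mathbb R^\ell\times N$ of $\mathbb R\times B$, where the lift of $g=dr^2+g_r$ is $dr^2+e^{2r}g_{\mathbb R^\ell}+h^2g_N$. This is exactly a metric of the form handled in Section~\ref{sec: products}, with $B_1=\mathbb R^\ell$, $g_{r,1}=e^{2r}g_{\mathbb R^\ell}$, $B_2=N$, $g_{r,2}=h^2g_N$: the families $g_{r,k}$ are rescalings of fixed metrics by functions of $r$ alone, hence simultaneously diagonalizable, and $\II_{g_r}$ is negative definite and bounded. Moreover $K_{dr^2+e^{2r}g_{\mathbb R^\ell}}\equiv-1$, since $g_{\mathbb R^\ell}$ is flat, and $K_{dr^2+h^2g_N}$ is negative and bounded below by Theorem~\ref{thm: bounded npc} applied to $(N,g_N)$, whose warping function may be taken to be our $h$. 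Hence parts (1) and (2) of the product theorem show that $dr^2+e^{2r}g_{\mathbb R^\ell}+h^2g_N$ has negative sectional curvature bounded below; since this manifold is a Riemannian covering of $(\mathbb R\times B,g)$, the same holds for $g$.

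Finally, in the adapted frame the endomorphism $g_{_B}^{-1}g_r$ is diagonal with determinant $e^{2\ell r}h(r)^{2(n-\ell)}$, where $n=\dim B$, so $\vol(B,g_r)=e^{\ell r}h(r)^{\,n-\ell}\vol(B,g_{_B})$ and
\[
\vol\bigl(\mathbb R_-\times B,\,dr^2+g_r\bigr)=\vol(B,g_{_B})\int_{-\infty}^{0}e^{\ell r}\,(e^r+\tau)^{\,n-\ell}\,dr,
\]
which is finite exactly because $\ell\ge1$ forces the integrand to be at most $(1+\tau)^{\,n-\ell}e^{\ell r}$ on $\mathbb R_-$. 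The point that must be set up carefully is the structure in the first two paragraphs — the descent of the de Rham factors to the parallel subbundles $E,E^\perp$ and the identification of the lifted metric as a genuine product of two warped products — after which the three asserted properties follow from results already proved, with essentially no new analytic input. (Alternatively one may bypass the product theorem and compute from the formulas of Section~2: writing $u=a\,\partial_r+\bar u$, $v=b\,\partial_r+\bar v$ with $\bar u,\bar v$ tangent to $B$ one gets $\langle R(u,v)v,u\rangle=-\sum_i\tfrac{h_i''}{h_i}(a\bar v_i-b\bar u_i)^2+\langle R(\bar u,\bar v)\bar v,\bar u\rangle$, and a short case analysis using $h_i',h_i''>0$ and $h'/h<1$ near $-\infty$ gives strict negativity whenever $u,v$ are independent.)
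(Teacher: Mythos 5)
Your proof is correct and follows essentially the same route as the paper: split the universal cover as $\mathbb R^\ell\times N$ by de Rham, warp the Euclidean directions by $e^r$ and the rest by the $h$ of Theorem~\ref{thm: bounded npc}, and descend. You are actually a bit more careful than the paper at the curvature step (the paper cites Theorem~\ref{thm: bounded npc} directly for the two-warping-function metric, whereas you correctly route this through the product theorem of Section~\ref{sec: products} applied to $dr^2+e^{2r}g_{\mathbb R^\ell}$ and $dr^2+h^2g_N$), and your direct integration $\vol(B,g_r)=e^{\ell r}h(r)^{n-\ell}\vol(B,g_{_B})$ replaces the paper's covering-by-product-charts argument for finiteness of volume; both differences are cosmetic.
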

\begin{proof}
Let  $(\mathbb R^s, g_0)\times (X, g_X)$ be a de Rham splitting
of the universal Riemannian cover $(\tilde B, \tilde{g}_{_B})$ of $(B, g_{_B})$,
where $g_0$ is the standard Euclidean metric, $s>0$, 
and $X$ has no Euclidean factors. 
Consider the metric $\tilde g=dr^2+ e^{2r} g_0+ h^2 g_X$ on $\mathbb R\times \tilde B$
where $h$ is as in Theorem~\ref{thm: bounded npc},
and by that theorem $K_{\tilde g}$ is negative and bounded below.
Since the de Rham decomposition
is invariant under isometries, the metric descends to a metric $dr^2+g_r$
on $\mathbb R\times B$ with the same curvature bounds.
By compactness $B$ is covered by finitely many ``product charts'' that
lift isometrically to the universal cover to the product of balls
in $\mathbb R^s$ and $X$. The product of any such chart with $\mathbb R_-$
has finite $g$-volume, and hence so does $\mathbb R_-\times B$.
\end{proof}

\section{Infranilmanifolds}
\label{sec: infra}

Let $B$ be an infranilmanifold of nilpotence degree $k$. 
The metric on $B$ constructed~\cite{BK-GAFA} can be modified to satisfy 
Theorem~\ref{thm-intro: ont}(ii),
while keeping the sectional curvature almost in $[-k^2, -1]$.
This was used but not explained in~\cite{Ont-pinch-hyperb},
so we supply a proof below. Here ``almost'' means that for any $\e>0$
there is a metric as in (ii) with sectional curvature within $[-(k+\e)^2, -1]$.

The metric in~\cite{BK-GAFA} is of the form $g=dr^2+g_r$, where
$g_r$ is simultaneously diagonalizable,
and the only part of (ii) that does not hold for $g$ is that
for all $r\ge c$ we have $g_r=e^{2kr}g_B$ where $g_B$ is a fixed metric. 
To satisfy (ii) we need to ``replace'' $k$ by $1$. 

To do so choose a non-increasing 
function $Q$ of $r$ such that $Q(r)=k$ on $[c, T_1]$ and $Q(r)=1$ for $r\ge T_2$.
By making $T_2\gg T_1$ we may assume that $Q^\prime$ is uniformly small.
Set $q:=kc+\int_c^{r} Q(r) dr$ so that $q(r)=kr$ on $[c,T_1]$ and $q(r)=r+(k-1)c$
for $r\ge T_2$. Set $h:=e^q$ and consider the metric $\bar g:=dr^2+\bar g_r$
such that $\bar g_r=g_r$ for $r\le T_1$, and $\bar g_r=h^2 g_B$ for $r\ge c$;
the two definitions of $\bar g_r$ agree on the overlap $[c, T_1]$.
For $r\ge T_2$ we have $\bar g_r=e^{2r} e^{2c(k-1)} g_B$, i.e. $\bar g_r$
is the $e^{2r}$ multiple of a fixed metric, as desired. 

It remains to show
that the sectional curvature $\bar g$ is almost in $[-k^2, -1]$ for suitable $T_1$, 
$T_2$, and for this was essentially done in~\cite{BK-GAFA}. Indeed, if $r\le T_1$,
then $\bar g_r=g_r$, so~\cite{BK-GAFA} applies directly, and if $r\ge T_1$, 
then the proof in~\cite{BK-GAFA} shows that up to a small error,
the sectional curvature of $\bar g$ is expressed via the quantities 
$\left(\frac{h^\prime}{h}\right)^2=Q^2$ and
$\frac{h^{\prime\prime}}{h}=Q^\prime+Q^2$, and the desired claim
follows since $Q^\prime$ is small, and $Q\in [1, k]$.

\section{Circle bundles of type (K)}
\label{sec: type K}

In this section we describe some circle bundles whose total spaces
lie in $\mathcal B$.

\begin{prop} 
The total space of an oriented circle bundle over 
closed nonpositively curved manifold
has a nonpositively curved metric
if and only if the bundle has torsion Euler class.
\end{prop}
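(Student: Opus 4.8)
The plan is to prove both directions by direct construction and a curvature obstruction. For the ``if'' direction, suppose the oriented circle bundle $S^1 \to E \xrightarrow{\pi} B$ has torsion Euler class $e$, say $m e = 0$ for some positive integer $m$. Then the pullback bundle under an $m$-fold fiberwise cover (or, equivalently, the bundle with Euler class $me$) is trivial, so $E$ is finitely covered by $B \times S^1$. It suffices to produce a nonpositively curved metric on $B \times S^1$ that is invariant under the deck group of this cover; then it descends to $E$. Concretely, I would first observe that a torsion Euler class means $E$ admits a flat $S^1$-connection: the classifying map $B \to BS^1 = \mathbb{CP}^\infty$ factors (up to homotopy, after passing to the torsion subgroup) through $B\mathbb{Z}/m \to BS^1$, so the structure group reduces to a finite cyclic group. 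Thus $E$ carries a flat connection whose holonomy is a homomorphism $\pi_1(B) \to S^1$ with finite image. Equip $E$ with the connection metric $g_E = \pi^* g_{_B} + \theta^2$, where $\theta$ is the flat connection $1$-form scaled to unit length on fibers. Because the connection is flat, its curvature $2$-form vanishes, so the O'Neill tensor $A$ of this Riemannian submersion is identically zero, and the fibers are totally geodesic flat circles. Hence by O'Neill's formulas the sectional curvature of $g_E$ on a horizontal plane equals the corresponding curvature of $g_{_B}$, mixed planes have curvature zero, and the purely vertical direction contributes nothing; therefore $K_{g_E} \le 0$.

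For the ``only if'' direction, suppose $E$ admits a metric $h$ of nonpositive curvature. Since $B$ is closed and nonpositively curved, $\pi_1(B)$ is infinite (assuming $\dim B \ge 1$; the zero-dimensional case is vacuous) and centerless modulo its nonpositively-curved flat part — more precisely, I would invoke the flat torus theorem. The fiber $S^1$ of the bundle is central in $\pi_1(E)$ when the bundle is orientable — wait, that is only automatic for principal bundles, so I should instead argue via the Gauss–Bonnet / Euler class directly. The cleaner route: the rational Euler class of the bundle is detected by the image of $\pi_1(S^1) = \mathbb{Z}$ in $\pi_1(E)$; the class is torsion if and only if this image is finite, equivalently if and only if some nonzero multiple of the fiber bounds, equivalently the fiber has finite order in $H_1(E;\mathbb{Z})$ modulo torsion — no. Let me use the standard fact: for an oriented circle bundle over a closed aspherical manifold $B$, the Euler class is non-torsion if and only if $\pi_1(E)$ fits in a central extension $1 \to \mathbb{Z} \to \pi_1(E) \to \pi_1(B) \to 1$ that does not virtually split, i.e. the center of $\pi_1(E)$ contains $\mathbb{Z}$ with infinite-order image. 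If $E$ is nonpositively curved, then by the solvable subgroup / flat torus theorem the center $Z(\pi_1(E))$ is free abelian and $\pi_1(E)$ virtually splits as $Z(\pi_1(E)) \times (\text{something})$ — more carefully, the flat torus theorem gives an isometric splitting $\tilde E = \mathbb{R}^k \times E'$ with $\pi_1(E)$ acting so that the $\mathbb{R}^k$ factor absorbs the center; passing to a finite cover we get $E_0 = T^k \times E_0'$, and this forces the central $\mathbb{Z}$ to have a finite-index complement, hence the extension virtually splits, hence the Euler class is torsion.

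The main obstacle will be making the ``only if'' direction rigorous: translating ``$E$ is nonpositively curved'' into ``the central extension virtually splits'' requires the full strength of the flat torus theorem (Lawson–Yau, Gromoll–Wolf) together with a careful analysis of how the central $\mathbb{Z}$ sits inside $Z(\pi_1(E))$ and the group-cohomology identification $H^2(B;\mathbb{Z}) \cong H^2(\pi_1(B);\mathbb{Z})$ of the Euler class with the extension class — one must check that a virtual splitting of the extension is equivalent to the transgression (Euler) class being torsion, which is a routine but not entirely trivial piece of homological algebra (a finite-index subgroup of $\pi_1(B)$ pulls the Euler class back to $n \cdot (\text{class})$ for the index $n$, and the restricted class vanishes iff the restricted extension splits). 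I would carry out the steps in the order: (1) reduce ``if'' to the flat connection construction and verify $K \le 0$ via O'Neill with $A \equiv 0$; (2) set up the central extension and its identification with the Euler class; (3) apply the flat torus theorem to extract a virtual splitting from nonpositive curvature; (4) conclude the Euler class is torsion. Steps (1) and (4) are short; step (3) is where the geometric content lives.
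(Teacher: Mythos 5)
Your proposal follows essentially the same strategy as the paper: for the ``only if'' direction, extract a virtual $\mathbb{Z}\times H$ splitting of $\pi_1(E)$ from nonpositive curvature and conclude via a finite cover and transfer that the rational Euler class vanishes; for the ``if'' direction, build a flat structure and use the vanishing of the $A$-tensor of the associated Riemannian submersion. A few comments on the implementation. In the ``only if'' direction the paper does not go through group cohomology: it cites the CAT$(0)$-group fact that centralizers virtually split (Bridson--Haefliger II.7.1(5)) to get $\pi_1(\hat E)\cong\mathbb{Z}\times H$ for a finite cover $\hat E$, then lifts the free circle action to $\hat E$ (Bredon I.9.1), identifies $\hat E\to\hat M$ as a pullback of $E\to M$, and obtains a homotopy section of $\hat E\to\hat M$ directly from the group-theoretic splitting (both spaces being aspherical), so the Euler class of $\hat E\to\hat M$ vanishes and transfer finishes. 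This sidesteps the ``routine but not entirely trivial'' homological-algebra step you flag. Your hesitation about whether the fiber is central is unnecessary: for an oriented circle bundle over an aspherical base the monodromy is trivial on $\pi_1(S^1)$, so $1\to\mathbb{Z}\to\pi_1(E)\to\pi_1(B)\to 1$ is automatically central. For the ``if'' direction, your Bockstein argument (an $m$-torsion Euler class lifts under $H^1(B;\mathbb{Z}/m)\to H^2(B;\mathbb{Z})$, giving a reduction of structure group to $\mathbb{Z}/m$) is correct and in fact gives slightly more than the paper uses; the paper instead cites Miyoshi and Oprea--Tanr\'e for flatness and then simply takes the quotient of the product metric on $\widetilde M\times S^1$ by the isometric $\pi_1(M)$-action.
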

\begin{proof} 
Consider an oriented (or equivalently principal)
circle bundle with total space $E$ and base $M$.

For the ``only if'' direction recall that in CAT($0$) groups
centralizers virtually split~\cite[Theorem II.7.1(5)]{BH-book}, 
so a fiber lifts homeomorphically to a finite cover $\hat E$ of $E$ where it represents
the $\mathbb Z$-factor in the splitting 
$\pi_1(\hat E)\cong\mathbb Z\times H$ for some group $H$. 
It follows that the free circle
action associated with the bundle lifts to a free circle action 
on $\hat E$, see~\cite[Theorem I.9.1]{Bre-book}. Thus $\hat E$ is the total space
of an oriented circle bundle, and the covering $\hat E\to E$
descends to a map of orbit spaces $\hat M\to M$, so that the bundle structure
on $\hat E$ is the pullback of the bundle structure on $E$. 
The bundle projection $\hat E\to\hat M$ maps $H$ isomorphically
onto $\pi_1(\hat M)$, and since $\hat E$ is aspherical,
the bundle has a homotopy section, and hence its Euler class vanishes. 
Therefore, the rational Euler class of $E\to M$ vanishes because finite 
covers are injective on rational cohomology by a transfer argument.

For the ``if'' direction, note that vanishing of the rational Euler class
forces the bundle to be flat Euclidean, see~\cite{Miy-tors-eul-cl} 
or~\cite{OprTan-flat-bdl}. Thus its total space $E$ is a $\pi_1(M)$-quotient 
of $\widetilde M\times S^1$,
where $\pi_1(M)$ acts by deck-transformations on the universal cover 
$\widetilde M$ and via a homomorphism
$\pi_1(M)\to S^1$ on the second factor. The action is isometric, so 
the total space carries a metric of $K\le 0$.
\end{proof}

\begin{rmk}
By the above proof, if the bundle has torsion Euler class,
the metric of $K\le 0$ on the total space has a local Euclidean 
de Rham factor, and hence the total space lies in $\mathcal B$ 
by Section~\ref{sec: npc}.
\end{rmk}

Deciding which circle bundles with non-torsion Euler class
lie in $\mathcal B$ seems much harder; to date I can only handle 
type (K) circle bundles, and the proof given below depends on one of the main results 
of~\cite{Bel-ch-warp}.

We seek to construct a manifold $\mathbb R\times B$ as in Theorem~\ref{thm-intro: ont}(ii)
such that $B$ is a circle bundle with non-torsion Euler class
over a closed complex hyperbolic $(n-1)$-manifold $M$.
Thus $M$ is the quotient of $\chm$ by a cocompact torsion-free lattice in $PU(n-1,1)$,
where $\mathbf{CH}^l$ denotes the complex hyperbolic space of complex dimension $l$
normalized to have holomorphic sectional curvature $-1$.
The strategy is to realize $M$ as a totally geodesic submanifold
in a complete complex hyperbolic $n$-manifold $V$ that is homotopy equivalent to $M$, 
and then modify the (incomplete) metric on $V\smallsetminus M$ to make it 
as in Theorem~\ref{thm-intro: ont}(ii).
Since there is no inclusion of $PU(n-1, 1)$ into $PU(n,1)$, in order
to produce $V$  with desired properties we assume that $M$ has type (K), i.e.
its holonomy representation $\pi_1(M)\to PU(n-1,1)$ lifts to $U(n-1,1)$,
so that we can compose the lift with the inclusion into $U(n,1)$
followed by the projectivization to $PU(n,1)$.

To this end fix a torsion-free cocompact lattice in $U(n-1,1)$,
where $n\ge 2$, and let $\hat\G$ be the image of the lattice
under the inclusion $U(n-1,1)\hookrightarrow U(n,1)$.
Let $\G$ denote the projecton of $\hat\G$ in $PU(n,1)$, which acts 
on $\chn$ holomorphic isometries. The kernel
of the projectivization $U(n,1)\to PU(n,1)$ consists of scalar matrices
that form a diagonally embedded $U(1)$, so since $\hat\G$ is discrete and torsion-free,
the projection $\hat\G\to\G$ is an isomorphism.

Then $\G$ acts freely on $\chn$, and it
stabilizes a subspace $\chm\subset\chm$ on which it acts with compact quotient 
$M:=\chm/\G$; thus $M$ is a compact, totally geodesic, embedded submanifold of 
$\bar M:=\chn/\G$.
The metric on $\chn$ can be written in cylindrical coordinates about $\chm$
as $dr^2+ g_r$ for $g_r=\sinh^2(r) d\phi^2 + \cosh^2\left(\frac{r}{2}\right) \mathbf{k}^{n-1}$ 
where $r\ge 0$ and $\mathbf{k}^{n-1}$ 
is the standard metric on $\chm$, see~\cite[Section 3]{Bel-ch-warp}. 
Let $F$ denote the unit normal bundle to $\chm$ in $\chn$, and 
set $\bar F:=F/\G$.
The proof of~\cite[Theorem 1.1(iii)]{Bel-ch-warp} yields a complete metric 
on $\mathbb R\times F$ of the form $dr^2+v^2 d\phi^2 + h^2 \mathbf{k}^{n-1}$.
The metric has sectional curvature in $[a,0)$ for some negative constant $a$, 
and given $\e>0$ can be chosen to satisfy 
$v=\sinh(r)$ and $h=\cosh\left(\frac{r}{2}\right)$ for $r\ge \e$. 
The metric is invariant under the stabilizer of $\chm$
in the isometry group of $\chn$, and hence it descends to a complete metric
on $\mathbb R\times\bar F$, which we denote $g_{v,h}$. By construction in~\cite{Bel-ch-warp}
the portion $(-\infty, 0\,]\times\bar F$ has finite $g_{v,h}$-volume.

The subgroup $U(n-1,1)$ of $U(n, 1)$ commutes with the copy of $U(1)$ 
where $U(n-1,1)\times U(1)$ is embedded into $U(n,1)$ via the map
\[
(A,B)\to  \left[
\begin{array}{rr}
A & 0\\
0 & B 
\end{array}\right],
\] and hence their images in $PU(n,1)$ also commute. 
It follows that there is an isometric $U(1)$-action on $\chn$ 
by rotation about $\chm$, and the corresponding action 
on $(\mathbb R\times \bar F, g_{v,h})$ is also isometric and free, where
$U(1)$-orbits are the fibers of the normal circle bundles 
$\{r\}\times\bar F\to M$.

Let $\bar F_m$ denote the quotient of $\bar F$ by $\mathbb Z_m\le U(1)$; 
this is a principal circle bundle over $M$ whose Euler class is 
the $m$th multiple of the Euler class of $\bar F$. 
Let $\bar g_{v,h,m}$ denote the metric on $\mathbb R\times \bar F_m$
that is descended from $(\mathbb R\times \bar F, g_{v,h})$; of course,
$\bar g_{v,h,m}$ and $g_{v,h}$ are locally isometric.

As is explained e.g. in~\cite[Lemma 13.1]{Bel-ch-warp} the first Chern class
of normal bundle of $M$ in $\chn/\G$ is represented $-\frac{\omega}{4\pi}$ 
where $\omega$ is the K\"ahler form of the complex hyperbolic metric on $\chm/\G$.
Thus the Euler class of the circle bundle $\bar F_m\to M$ equals $-m\frac{\omega}{4\pi}$;
we refer to such circle bundles as the type (K). 

\begin{ex}
If $n=1$, then $M$ is a closed orientable surface of negative
Euler characteristic, and according to~\cite[Corollary 2.3.4]{GKL}
$-\frac{\omega}{4\pi}$ integrated over $M$ equals 
$\pm\chi(M)/2$; thus if $M$ has genus $2$, then every
orientable circle bundle over $M$ has type (K).
\end{ex}

The metric $(\mathbb R\times \bar F_m, g_{v,h,m})$ is simultaneously diagonalizable
on the fiber, and it satisfies all
properties needed in (ii) except that it is not equal to $d^2+ e^{2r} g_{_{B_m}}$
for large $r$. In what follows we modify $g_{v,h,m}$ for large $r$ to 
make (ii) hold. The idea is similar to that of Section~\ref{sec: infra}
but details are more involved. Locally
\[
\bar g_{v,h,m}=dr^2+ v^2 d\phi^2 + h^2 \mathbf{k}^{n-1}
\] 
and for $r\ge \e$ we have 
$v=\sinh(r)$ and $h=\cosh\left(\frac{r}{2}\right)$, so
the metric is complex hyperbolic, and its sectional curvature is within $[-1, -\frac{1}{4}]$.
Small $C^2$ change of the warping functions affect the curvature only slightly,
so as a first step we change $v$, $h$ so that 
$v=\frac{e^r}{2}$ and $h=\frac{1}{2}e^\frac{r}{2}$
for $r\ge T_0$ provided $T_0$ is large enough. 
Next we wish to change $h$ to $\frac{e^r}{2}$ for large $r$ while keeping
curvature negative. To this end let $Q$ be a smooth non-decreasing function
such that $Q=\frac{1}{2}$ on $[T_0, T_1]$ and $Q=1$ for $r\ge T_2$. Let 
$q(r):=\frac{T_0}{2}+\int_{T_0}^r Q(r) dr$. Setting $h:=\frac{e^q}{2}$ defines a metric,
which we denote $g_{v,h,m}$, that agrees the previously defined metric for $r\le T_0$, while
if $r\ge T_2$, then
$g_{v,h,m}-dr^2$ is the $e^{2r}$ multiple of the fixed metric as desired for (ii).

It remains to show that 
the curvature remains negatively pinched for $r\ge T_0$. 
Set $s:= \frac{v}{h^2}$; as $h\ge \frac{1}{2} e^{\frac{r}{2}}$ we get $0<s\le 2$
for $r\ge T_0$.
The sectional curvature of any metric of the form
$g_{v,h,m}$ was computed in~\cite[Section 9]{Bel-ch-warp}
in terms of $h$ and $v$, and below we review the results of this computation. 
Let $\{C,D\}$ denote
an orthonormal basis in a two-plane with
\[
C=c_0\ddr+c_1Y_1+c_2Y_2+c_3Y_3, \quad D=d_1Y_1+d_2Y_2,
\]
where $c_i, d_j\in\mathbb R$ and $\{\d_r, Y_1, Y_2, Y_3\}$ are orthonormal, 
and $Y_1$ is tangent to the circle fiber.
The sectional curvature of the plane is then given by
\begin{eqnarray}
\label{form: k(c,d)}
& (d_1c_2-d_2c_1)^2 K (Y_2,Y_1)+
d_1^2c_3^2 K(Y_3,Y_1)+d_1^2c_0^2 K(\ddr,Y_1)+\\
\nonumber &  
d_2^2c_0^2 K(\ddr,Y_2)+
d_2^2c_3^2 K(Y_3,Y_2)+
3d_1d_2c_0c_3 \langle R(\ddr , Y_1) Y_2, Y_3\rangle.
\end{eqnarray}
where 
\begin{eqnarray*} 
& \label{form: k(y_i,y_1)}
K (Y_2,Y_1)=K(Y_3,Y_1)=
\frac{s^2}{16}-\frac{v^\prime}{v}\frac{h^\prime}{h}\le \frac{1}{4}-Q\le -\frac{1}{4},\\
& \label{form: k(y_3,y_2)}
K (Y_3,Y_2)=-\frac{1}{4h^2} - \frac{3}{h^2}c_{23}^2- 
3c_{23}^2\frac{s^2}{4}-\left(\frac{h^\prime}{h}\right)^2 <
-3c_{23}^2\frac{s^2}{4},\\
& \label{form: k(dr,y_i)}
K(\ddr,Y_1)=-\frac{v^{\prime\prime}}{v}=-1, \ \ \ \ \
K(\ddr,Y_2)=-\frac{h^{\prime\prime}}{h}=-Q^\prime-Q^2,\\
& \label{form: r(dr,y_1, y_2, y_3)}
\langle R(\ddr , Y_1) Y_2, Y_3\rangle=
-c_{23}\frac{v}{h^2}\left(\frac{v^\prime}{v}-
\frac{h^\prime}{h}\right)=-c_{23} s (1-Q).
\end{eqnarray*}
where $c_{23}$ is a constant with $|c_{23}|\le\frac{1}{2}$
defined by $[Y_2, Y_3]=c_{23}\frac{v}{h^2} Y_1$.
Now 
\[
d_1^2c_0^2 K(\ddr,Y_1)+d_2^2c_3^2 K(Y_3,Y_2)< -|d_1c_0|^2-|d_2c_3|^2 \frac{3c_{23}^2 s^2}{4}=
\]
\[
-\left(
|d_1c_0|-|d_2c_3|\frac{\sqrt{3}|c_{23}| s}{2}\right)^2-
|d_1c_0d_2c_3\,c_{23}|\sqrt{3}s,
\]
while $0\le 1-Q\le \frac{1}{2}$ implies
\[
3d_1d_2c_0c_3 \langle R(\ddr , Y_1) Y_2, Y_3\rangle\le
|d_1d_2c_0c_3\,c_{23}| \frac{3}{2} s
\]
so these three terms of (\ref{form: k(c,d)}) add up to a nonpositive number.
Since the other three terms are also nonpositive, $K(C,D)\le 0$, and
if $K(C,D)$ is zero somewhere, then the coefficients for the latter three 
terms vanish,
and in particular, $d_1c_3=0$, so the mixed term 
$3d_1d_2c_0c_3 \langle R(\ddr , Y_1) Y_2, Y_3\rangle$ is not present in the sum,
and elementary linear algebra (as in~\cite[Remark 9.6]{Bel-ch-warp}) leads
to a contradiction with $K(C,D)=0$. Thus 
$K(C,D)<0$ for $r\ge T_0$, which
completes the proof that every type (K) circle 
bundle lies in $\mathcal B$.

\begin{rmk}
A similar construction in the real hyperbolic case yields
circle bundles with torsion Euler class, see~\cite{Bel-rh-warp}.
\end{rmk}

\section{Proof of Theorem~\ref{thm-intro: class B}} 
\label{sec: proof of main thm}

That $\mathcal B$ contains each infranilmanifold,
every circle bundle of type  \textup{(K)}, 
and each closed manifold of $K\le 0$ with a local 
Euclidean de Rham factor is proved in 
Sections~\ref{sec: npc}--\ref{sec: type K}
and in each case the metric on the the fiber $B$
is simultaneously diagonalizable. Hence Section~\ref{sec: products} implies
that $\mathcal B$ is closed under products, and obviously
it is closed under disjoint unions.
Finally, if $B$ is a manifold in $\mathcal B$, then inductively it comes with
a family $g_r$ of simultaneously diagonalizable metrics such that $dr^2+g_r$ is as in (ii).
If $(M, g_{_M})$ is a closed manifold of $K\le 0$, and $h$ is as in Theorem~\ref{thm: bounded npc},
then $g_r+h^2g_{_M}$ is simultaneously diagonalizable, and
$dr^2+g_r+h^2g_{_M}$ is as in (ii).

\small

\def\cprime{$'$}
\providecommand{\bysame}{\leavevmode\hbox to3em{\hrulefill}\thinspace}
\providecommand{\MR}{\relax\ifhmode\unskip\space\fi MR }
\providecommand{\MRhref}[2]{%
  \href{http://www.ams.org/mathscinet-getitem?mr=#1}{#2}
}
\providecommand{\href}[2]{#2}

\end{document}